\theoremstyle{plain}
\newtheorem{theorem}{Theorem}[section]
\theoremstyle{plain}
\newtheorem{proposition}[theorem]{Proposition}
\theoremstyle{plain}
\newtheorem{lemma}[theorem]{Lemma}
\theoremstyle{plain}
\newtheorem{corollary}[theorem]{Corollary}
\theoremstyle{plain}
\newtheorem{definition}[theorem]{Definition}
\theoremstyle{plain}
\theoremstyle{remark}
\newtheorem{remark}[theorem]{Remark}
\theoremstyle{remark}
\theoremstyle{remark}
\newtheorem{acknowledgment}{Acknowledgment}
\begin{document}

\title
{The class $\mathcal{S}$ as an ME invariant}

\author{Hiroki Sako}
\address{Department of Mathematical Sciences, University of Tokyo, Komaba, Tokyo, 153-8914, Japan.}
\email
{hiroki@ms.u-tokyo.ac.jp}

\begin{abstract}
We prove that being in Ozawa's class $\mathcal{S}$
is a measure equivalence invariant.
\end{abstract}

\keywords{measurable group theory; solid von Neumann algebra.}

\maketitle

\section{introduction}

Measurable group theory is a new field and has been attracting many researchers having various backgrounds. The discipline deals with how much information on countable groups is preserved through measure equivalence. The notion of measure equivalence was given by Gromov \cite{gromov} as a variant of quasi-isometry. Two groups are said to be \textit{measure equivalent} (ME) if there exists an ME coupling, instead of topological coupling.

In most cases, much information is lost through ME couplings. For example, any two countable amenable groups are ME. This is a consequence of \cite{Ornstein--Weiss}, \cite{Connes--Feldman--Weiss}. Many people are interested in finding small measure equivalence classes (higher rank lattices \cite{Furman; Higher Rank Lattice}, mapping class groups with high complexity \cite{Kida}) or in classifying non-amenable groups up to measure equivalence.

We will prove that Ozawa's class $\mathcal{S}$ defined in \cite{Ozawa; Kurosh} is an ME invariant class. The class was defined by means of topological amenability \cite{Anantharaman-Delaroche} on the largest boundary. Ozawa and Popa proved classification results on group von Neumann algebras of the class $\mathcal{S}$ (\cite{ozawa; solid}, \cite{Ozawa--Popa; Prime Factorization}).

\begin{definition}[\cite{Ozawa; Kurosh}]\label{Definition; The class S}
A countable group $G$ is said to be in $\mathcal{S}$ if the left-times-right
translation action of $G \times G$ on $\beta G \cap G^c$ is amenable,
where $\beta G \cap G^c$ is the Gelfand spectrum of the commutative $C^*$-algebra
$\ell_\infty G / c_0 G$.
\end{definition}

The following is the main theorem of this paper.

\begin{theorem}\label{Introduction; S is MEinvariant}
If $G$ and $\Gamma$ are ME and if $\Gamma \in \mathcal{S}$,
then $G \in \mathcal{S}$.
\end{theorem}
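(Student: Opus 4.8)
The plan is to replace the topological definition of membership in $\mathcal{S}$ by an equivalent combinatorial one and then transport it across an ME coupling by means of the associated cocycle. For the first step, recall the Anantharaman--Delaroche description \cite{Anantharaman-Delaroche} of topological amenability of an action on a compact space, and note that $C(\beta G\cap G^c)=\ell_\infty G/c_0 G$ imposes \emph{no} continuity requirement along $G$ itself. Thus $G\in\mathcal{S}$ is equivalent to: there is a net of maps $\mu_i\colon G\to\mathrm{Prob}(G\times G)$, uniformly essentially finitely supported, such that for every pair $(g,h)\in G\times G$
\[
\lim_i\ \limsup_{s\to\infty}\ \bigl\|(g,h)_{*}\,\mu_i(s)-\mu_i(g s h^{-1})\bigr\|_1 = 0 ,
\]
where $(g,h)$ acts on $G$ by $s\mapsto gsh^{-1}$ and ``$s\to\infty$'' means eventually leaving every finite subset of $G$. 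The same reformulation applied to $\Gamma\in\mathcal{S}$ furnishes a net $(\nu_j)$ on $\Gamma$ with the corresponding asymptotic left-and-right equivariance, and the task becomes to build such a net on $G$ out of the one on $\Gamma$.

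Fix an ME coupling $(\Omega,m)$ of $G$ and $\Gamma$; after passing to an ergodic component and rescaling, take a $G$-fundamental domain $X$ and a $\Gamma$-fundamental domain $Y$ with $m(X)=m(Y)=1$ and (replacing $Y$ by a $\Gamma$-translate) $m(X\cap Y)>0$. Let $\pi_G\colon\Omega\to G$ and $\pi_\Gamma\colon\Omega\to\Gamma$ be the tiling maps ($\omega\in\pi_G(\omega)X$, $\omega\in\pi_\Gamma(\omega)Y$) and let $\alpha\colon\Gamma\times X\to G$, $\beta\colon G\times Y\to\Gamma$ be the ME cocycles, so that $\pi_G(g\omega)=g\,\pi_G(\omega)$ and $\pi_G(\gamma\omega)=\pi_G(\omega)\,\alpha(\gamma,\pi_G(\omega)^{-1}\omega)$, with symmetric identities for $\pi_\Gamma$. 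Two elementary facts will drive the estimates: for each fixed $\gamma\in\Gamma$ the value $\alpha(\gamma,\cdot)$ is supported, off a subset of $X$ of arbitrarily small measure, on a finite subset of $G$ (and symmetrically for $\beta$); and for every finite $F\subseteq\Gamma$ one has $\sum_{s\in G}m\{x\in X:\pi_\Gamma(sx)\in F\}=|F|$, so the law of $x\mapsto\pi_\Gamma(sx)$ on $(X,m|_X)$ escapes to infinity in $\Gamma$ as $s\to\infty$, uniformly in the relevant sense. I would then define $\mu_i(s)\in\mathrm{Prob}(G\times G)$ as the average over $x\in X$ of the push-forward of $\nu_i\bigl(\pi_\Gamma(sx)\bigr)$ under a cocycle-transport map $\Gamma\times\Gamma\to G\times G$ anchored at $x$ (in each coordinate roughly $\gamma\mapsto s\,\alpha(\gamma\,\pi_\Gamma(sx)^{-1},x)$). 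Essential finiteness of the cocycle keeps $\mu_i(s)$ essentially finitely supported; $G$-equivariance of $\pi_G$ and $\pi_\Gamma$ carries the left equivariance of the $\nu_i$; the escape-to-infinity fact routes the ``deep'' part of the displayed estimate through the asymptotic equivariance of $\nu_i$; and what is left is an error assembled from finitely many cocycle values of controlled total mass.

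The main obstacle is exactly this last error term. The coupling carries the \emph{left}-translation structure of $G$ transparently, since it is an honest $G$-action on $\Omega$ and the tiling maps are $G$-equivariant; but the \emph{right}-translation action of $G$ on the set $G$ — the genuinely noncommutative ``bi'' half of $\mathcal{S}$ — is not an action on $\Omega$ at all, and under the naive transport, replacing $s$ by $gsh^{-1}$ introduces a residual twist by cocycle values of the shape $\alpha\bigl(\beta(g,\cdot)^{-1},\cdot\bigr)$, which is not small at infinity and therefore not absorbed by the ``$\bmod\ c_0$'' slack. Surmounting this will require a more symmetric construction that uses both fundamental domains and both cocycles simultaneously, together with the fact that a fixed group element produces only finitely many cocycle values over a fundamental domain, of total mass one, so that the twist can be either cancelled exactly or driven to $0$ as $i\to\infty$ jointly with $s\to\infty$; the final construction should be visibly insensitive to the choice of fundamental domains and base points, which is the measure equivalence invariance in disguise. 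Granting the asymptotic left-and-right equivariance of the net $(\mu_i)$, the reformulation of the first paragraph gives $G\in\mathcal{S}$, and nothing further is needed since the coupling is symmetric in $G$ and $\Gamma$.
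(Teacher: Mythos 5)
Your reduction of membership in $\mathcal{S}$ to the existence of a net $\mu_i\colon G\to\mathrm{Prob}(G\times G)$ with asymptotic bi-equivariance modulo $c_0$ is a correct unwinding of Definition~\ref{Definition; The class S} via \cite{Anantharaman-Delaroche}, and your diagnosis of the obstruction is exactly right: the left $G$-action transports through the coupling because it is an honest action on $\Omega$, but the right-translation half of the $G\times G$-action has no counterpart on $\Omega$, and the naive cocycle transport picks up a twist that does not vanish at infinity. The problem is that you then stop there. The sentence beginning ``Granting the asymptotic left-and-right equivariance of the net $(\mu_i)$\ldots'' assumes precisely the statement to be proved; the ``more symmetric construction'' you call for is not sketched, and it is the entire mathematical content of the theorem, not a finishing detail. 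As written, the proposal is a correct reduction plus an accurate description of why the obvious attempt fails, with no mechanism supplied to overcome the failure. A secondary gap: your combinatorial criterion silently presupposes exactness of $G$ (the measures must be uniformly essentially finitely supported, and the characterization of $\mathcal{S}$ used in the paper, Proposition~\ref{Proposition; Continuity for G}, explicitly requires exactness as a separate hypothesis); inheriting exactness across the coupling needs its own argument, which the paper supplies via weak exactness of $\mathcal{M}$ and Ozawa's theorem (Remark~\ref{Remark; Exactness is an ME invariant}).

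For comparison, the paper's resolution of the difficulty you identified is to abandon the pointwise/cocycle picture and work on $L^2(\mathcal{R}_\beta)$, where the left and right actions of \emph{both} groups are simultaneously represented as honest operators: $u_\gamma$ and $Ju_\gamma J$ for $\Gamma$, and $v_g$ and $Jv_gJ$ for $G$ on the subspace $\ell_2 G\subset L^2(\mathcal{R}_\alpha)$. The role of ``$c_0$ at infinity'' is played by the hereditary algebra $K$ generated by the finite-width projections $e(\Gamma_0)$, and the bridge between the two incompatible notions of ``finite width'' (with respect to $\Gamma$ versus with respect to $G$) --- which is the rigorous form of your ``elementary facts'' about cocycles being essentially finitely valued --- is Lemma~\ref{Lemma; ME Projections}: after cutting by projections $q_n$ of trace close to $1$, each $f(G_0)$ is dominated by some $e(\Gamma_0)$ and conversely. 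Hypothesis $\Gamma\in\mathcal{S}$ enters through nuclearity of $(\ell_\infty\Gamma/c_0\Gamma\otimes L^\infty Y\otimes JL^\infty YJ)\rtimes_{\mathrm{red}}(\Gamma\times\Gamma)$, and the conclusion is extracted with the Choi--Effros lifting theorem and Arveson's theorem rather than by an explicit net of measures. If you want to salvage your approach, you would need to build your ``symmetric construction'' on the groupoid $\mathcal{R}_\beta$ (equivalently, on pairs of points of the coupling) rather than on $\Omega$ itself; that is, in effect, what the operator-algebraic argument does.
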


The class $\mathcal{S}$ is an intermediate class between the set of exact groups and that of amenable groups, which are also ME invariant classes. These three classes are also characterized by topological amenability. A countable group is exact if and only if there exists an amenable action on a compact space \cite{ozawa; amenable actions and exactness}. A countable group is amenable if and only if any continuous action on any compact space is amenable.

By Hjorth's theorem \cite{hjorth; cost}, A countable group $G$ is treeable in the sense of Pemantle and Peres \cite{pemantle--peres; treeable}, if and only if $G$ is ME to a free group ($\mathbb{Z}, \mathbb{F}_2$ or $\mathbb{F}_\infty$). As a corollary of Theorem~\ref{Introduction; S is MEinvariant}, the class of treeable groups is an intermediate ME invariant class between $\mathcal{S}$ and the set of amenable groups. We get the following fact on group von Neumann algebras:

\begin{corollary}
If $G$ is ME to a free group, then the group von Neumann algebra $L(G)$ is solid, namely, every diffuse subalgebra has the injective relative commutant.
\end{corollary}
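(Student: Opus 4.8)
The plan is to obtain the corollary as a direct consequence of Theorem~\ref{Introduction; S is MEinvariant} combined with two facts already available in the literature: that free groups belong to $\mathcal{S}$, and that a countable group in $\mathcal{S}$ has a solid group von Neumann algebra.

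First I would recall that each of $\mathbb{Z}$, $\mathbb{F}_2$, $\mathbb{F}_\infty$ lies in $\mathcal{S}$. For $\mathbb{Z}$ this is immediate from amenability: since $\mathbb{Z}$ is amenable, the left-times-right action of $\mathbb{Z}\times\mathbb{Z}$ on $\beta\mathbb{Z}\cap\mathbb{Z}^{c}$ is amenable, and indeed $\mathcal{S}$ contains every amenable group. For the non-abelian free groups one uses the computation in \cite{Ozawa; Kurosh}, where the action of $\mathbb{F}_n\times\mathbb{F}_n$ on $\beta\mathbb{F}_n\cap\mathbb{F}_n^{\,c}$ is shown to be topologically amenable, essentially because it is governed by the amenable action of $\mathbb{F}_n$ on the Gromov boundary of its Cayley tree.

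Now suppose $G$ is ME to a free group $F\in\{\mathbb{Z},\mathbb{F}_2,\mathbb{F}_\infty\}$. Since $F\in\mathcal{S}$, Theorem~\ref{Introduction; S is MEinvariant} gives $G\in\mathcal{S}$. Finally I would invoke Ozawa's solidity theorem (\cite{ozawa; solid}; see also \cite{Ozawa--Popa; Prime Factorization}): for every countable $H\in\mathcal{S}$ the algebra $L(H)$ is solid, meaning that the relative commutant in $L(H)$ of any diffuse von Neumann subalgebra is injective. Taking $H=G$ completes the proof. (By Hjorth's theorem \cite{hjorth; cost}, the hypothesis that $G$ be ME to a free group is the same as $G$ being treeable in the sense of \cite{pemantle--peres; treeable}, so the corollary applies verbatim to treeable groups.)

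Since Theorem~\ref{Introduction; S is MEinvariant} does the real work, no serious obstacle remains; the only points needing care are to cite the input results in sufficient generality --- in particular that Ozawa's solidity argument covers the whole class $\mathcal{S}$ rather than only word-hyperbolic groups --- and, when $G$ is not ICC, to read the conclusion as ``$L(G)$ is a solid von Neumann algebra'' rather than ``solid factor.''
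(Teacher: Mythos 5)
Your proposal is correct and follows exactly the paper's own two-line argument: free groups lie in $\mathcal{S}$, Theorem~\ref{Introduction; S is MEinvariant} transports membership in $\mathcal{S}$ to $G$, and Ozawa's solidity theorem \cite{ozawa; solid} for class $\mathcal{S}$ groups yields the conclusion. The extra remarks on treeability and the non-ICC case are fine but not needed.
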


Since the free groups are in the class $\mathcal{S}$, if $G$ is ME to a free group, then $\Gamma \in \mathcal{S}$. By \cite{ozawa; solid}, $L(G)$ is solid.

\section{Measure Equivalence and Measure Embedding}
The following is a generalization of Gromov's measure equivalence ($0.5.$E in \cite{gromov}).

\begin{definition}\label{definition; measure embedding}
Let $G$ and $\Gamma$ be countable groups.
We say that the group $G$ {\rm measurably embeds into} $\Gamma$,
if there exist a standard measure space $(\Sigma, \nu)$, a measure preserving
action of $G \times \Gamma$ on $\Sigma$ and measurable subsets $X, Y \subset \Sigma$
with the following properties:
\begin{eqnarray*}
    \Sigma = \bigsqcup_{\gamma \in \Gamma} \gamma (X)
    = \bigsqcup_{g \in G} g (Y), \quad
    \nu(X) < \infty.
\end{eqnarray*}
Then we use the notation $G \preceq_{\rm ME} \Gamma$.
The infinite measure space $\Sigma$ equipped with the $G \times \Gamma$-action
is called a {\rm measure embedding of} $G$ {\rm into} $\Gamma$.
The measure embedding $\Sigma$ is said to be {\rm ergodic},
if the $G \times \Gamma$-action is ergodic.
\end{definition}

If the subset $Y$ also has finite measure, then $\Sigma$ gives an \textit{ME coupling} between $G$ and $\Gamma$ and these groups are said to be \textit{measure equivalent} (ME). As in the case of an ME coupling, if there exists a measure embedding of $G$ into $\Gamma$, there exists an ergodic one by using ergodic decomposition. See Lemma~2.2 in Furman \cite{Furman; Higher Rank Lattice} for the proof.

\begin{remark}\label{Remark; MEm}
\begin{enumerate}
\item
    The relation $\preceq_\mathrm{ME}$ is transitive;
    if $H \preceq_\mathrm{ME} \Lambda$ and $\Lambda \preceq_\mathrm{ME} \Gamma$, then $H \preceq_\mathrm{ME} \Gamma$. The proof is same as that of transitivity of measure equivalence.
\item
    If countable groups $G$ and $\Gamma$ satisfy $G \preceq_\mathrm{ME} \Gamma$ and if $\Gamma$ is amenable, then $G$ is also amenable. Exactness has the same property
    $($see Remark~$\ref{Remark; Exactness is an ME invariant})$.
\end{enumerate}
\end{remark}

\section{The class $\mathcal{S}$ as an ME Invariant}
\label{Section; The class S as an ME Invariant}

Theorem \ref{Theorem; S is MEinvariant} is stronger than Theorem~\ref{Introduction; S is MEinvariant} and follows from Proposition \ref{Proposition; S is MEinvariant in SOE setting}.

\begin{theorem}\label{Theorem; S is MEinvariant}
If $G \preceq_{\mathrm{ME}} \Gamma$ and $\Gamma \in \mathcal{S}$,
then $G \in \mathcal{S}$.
\end{theorem}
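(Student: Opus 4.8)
The plan is to reduce Theorem~\ref{Theorem; S is MEinvariant} to a statement about amenability of a boundary action transferred along the measure embedding. Since $\Gamma \in \mathcal{S}$ means the $\Gamma \times \Gamma$-action on $\beta\Gamma \cap \Gamma^c$ is (topologically) amenable, the goal is to produce, for the group $G$, an amenable action on some compact space from which amenability of the $G\times G$-action on $\beta G \cap G^c$ follows. My first move is to invoke the measure embedding data: a standard infinite measure space $(\Sigma,\nu)$ with commuting measure-preserving actions of $G$ and $\Gamma$, and fundamental domains $X$ for the $\Gamma$-action and $Y$ for the $G$-action, with $\nu(X)<\infty$. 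From this one manufactures, in the usual way, a measurable \emph{cocycle} $\alpha\colon G \times X \to \Gamma$ (and a companion cocycle in the other direction on $Y$), recording how the $G$-action shuffles the $\Gamma$-fundamental domain. The strategy is then to use $\alpha$ to pull back the topologically amenable $\Gamma$-action to $G$.

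**The key technical step** is to pass from the measure-theoretic cocycle to something that controls the boundary $\beta G \cap G^c$. Here I expect the argument to run through the \emph{Roe algebra / uniform Roe algebra} picture, or equivalently the characterization of membership in $\mathcal{S}$ in terms of the existence of a net of positive-type-like ``Ozawa kernels'' $\mu_n\colon G \times G \to \mathrm{Prob}(G)$ that are (i) asymptotically invariant under the left-times-right $G\times G$-translation and (ii) supported near the diagonal in an appropriate coarse sense, coming from topological amenability of $G \curvearrowright \beta G$ relative to the bi-action. The plan is: take the corresponding data $\nu_n\colon \Gamma\times\Gamma \to \mathrm{Prob}(\Gamma)$ witnessing $\Gamma \in \mathcal{S}$; transport it to $G$ using the cocycle $\alpha$ and the finite-measure fundamental domain $X$; then average over $X$ (this is where $\nu(X)<\infty$ is essential) to kill the $X$-dependence and obtain genuine functions on $G\times G$. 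One must check that the transported kernels remain asymptotically $G\times G$-invariant — this is a direct computation using the cocycle identity — and that they are still ``proper'' in the coarse sense, i.e. supported within bounded distance of the diagonal after identifying the coarse geometry of $G$ with that of $\Gamma$ via the embedding. Because we only have a measure \emph{embedding} rather than a full ME coupling, the roles of the two fundamental domains are asymmetric, but $X$ having finite measure is exactly what is needed on the side we are averaging over.

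**The main obstacle** I anticipate is the measurability-versus-topology tension: the cocycle $\alpha$ is only defined $\nu$-a.e.\ and is merely measurable, whereas amenability of the action on $\beta G \cap G^c$ is a statement about a \emph{continuous} action on a \emph{compact} space. Bridging this gap is the heart of the proof. The standard device is to not work with $\beta G$ directly but with the equivalent formulation of $\mathcal{S}$ via the bimodule / Roe-algebra condition (``$\mathcal{S}$ means the uniform Roe corona, or the compression of $\ell_\infty G \rtimes (G\times G)$, has a certain nuclearity/invariant-state property''), where one can feed in $L^\infty$-cocycle data and integrate. Concretely I would: (1) state and use the reformulation of $G\in\mathcal{S}$ as existence of the asymptotically invariant Ozawa-type kernels above (citing \cite{Ozawa; Kurosh}, and the relative-amenability results of \cite{Anantharaman-Delaroche}); (2) set up the measure embedding and its cocycle, following the conventions of Furman \cite{Furman; Higher Rank Lattice} and the stronger ``SOE-setting'' Proposition~\ref{Proposition; S is MEinvariant in SOE setting} that this theorem is said to follow from; (3) perform the transport-and-average construction, verifying asymptotic invariance via the cocycle relation and properness via the coarse comparison of $G$ and $\Gamma$ furnished by the fundamental domains; (4) conclude. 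The finiteness $\nu(X)<\infty$ enters in step (3) to guarantee the averaged objects are finite-valued probability kernels; the asymmetry of the embedding means one does \emph{not} need $\nu(Y)<\infty$, which is precisely the gain of Theorem~\ref{Theorem; S is MEinvariant} over Theorem~\ref{Introduction; S is MEinvariant}.
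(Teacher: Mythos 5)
Your proposal diverges from the paper's argument: the paper never constructs Ozawa-type kernels for $G$. Instead it reduces Theorem~\ref{Theorem; S is MEinvariant} to the stable-orbit-equivalence statement of Proposition~\ref{Proposition; S is MEinvariant in SOE setting} (ergodic decomposition, forcing freeness of the induced $G$-action by tensoring with a free probability $G$-space, and replacing $\Gamma$ by $\Gamma\times\mathbb{Z}/n\mathbb{Z}$ to make the compression constant at least $1$), and then works entirely $C^*$-algebraically inside $L^\infty(Y)\rtimes_{\mathrm{red}}\Gamma$: the hypothesis $\Gamma\in\mathcal{S}$ is used only through the nuclearity of $(\ell_\infty\Gamma/c_0\Gamma\otimes L^\infty Y\otimes JL^\infty YJ)\rtimes_{\mathrm{red}}(\Gamma\times\Gamma)$, which yields continuity of $\widetilde{\Psi}$ modulo the ideal $K$ (Proposition~\ref{Proposition; ME Continuity for Gamma}); the map $\Phi$ of Proposition~\ref{Proposition; Continuity for G} is then produced by Choi--Effros lifting and Arveson's theorem, with Lemmas~\ref{Lemma; ME Projections} and~\ref{Lemma; ME Cutting} compensating for the failure of the inclusion $C^*_\lambda(G)\subset B$.

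The gap in your route sits exactly at the step you dismiss as ``a direct computation using the cocycle identity.'' Write $\omega\colon G\times X\to\Gamma$ for the ME cocycle. For the transported kernel one must compare $\mu(\omega(sgt,x))$ with $\mu(\omega(g,x))$, and the cocycle identity gives $\omega(sgt,x)=\omega(s,(gt)\cdot x)\,\omega(g,t\cdot x)\,\omega(t,x)$: the left and right translators $\omega(s,(gt)\cdot x)$ and $\omega(t,x)$ depend on $x$ \emph{and on $g$}, whereas $\Gamma\in\mathcal{S}$ only gives $\|\mu(\sigma\gamma\tau)-\sigma\cdot\mu(\gamma)\|_1\to0$ as $\gamma\to\infty$ for \emph{fixed} $\sigma,\tau$, not uniformly; one must also show that $\omega(g,x)$ escapes to infinity in $\Gamma$ as $g\to\infty$ uniformly enough in $x$ for the pointwise statement to survive the average over $X$. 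It is precisely the two-sidedness of the condition (as opposed to the one-sided conditions characterizing amenability or exactness, where such cocycle-averaging arguments do go through) that makes this nontrivial, and the proposal supplies no mechanism for it. In addition, your appeal to ``the coarse comparison of $G$ and $\Gamma$ furnished by the fundamental domains'' to control supports is unfounded: a measure equivalence is not a coarse equivalence, the cocycle $\omega(g,\cdot)$ is merely measurable and unbounded, and no identification of coarse geometries is available. (The characterization you would cite, Proposition 15.2.3 of \cite{Brown--Ozawa; Approximation}, in fact requires no support condition on the kernels, only exactness of $G$ plus asymptotic bi-invariance; but that only removes the misdirected step, it does not fill the main one.) As written, the proposal therefore does not constitute a proof.
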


\begin{proposition}\label{Proposition; S is MEinvariant in SOE setting}
Suppose $\Gamma \in \mathcal{S}$.
Let $\beta$ be a free measure preserving $($m.p.$)$ action of $\Gamma$ on a standard measure space $(Y, \mu)$ and let $\alpha$ be a free m.p.\ action of $G$
on a measurable subset $X \subset Y$ with measure $1$.
If their orbits satisfy
$\alpha(G) (x) \subset \beta(\Gamma) (x)$ for ${\rm a.e. \ } x \in X$,
then $G \in \mathcal{S}$.
\end{proposition}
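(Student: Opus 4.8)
My plan is to transport the topological amenability that witnesses $\Gamma\in\mathcal S$ across the orbit cocycle produced by the orbit inclusion hypothesis. For a countable group $H$ write $\partial H$ for $\beta H\cap H^c$.

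\emph{The cocycle.} For a.e.\ $x\in X$ and every $g\in G$, the hypothesis $\alpha(G)(x)\subset\beta(\Gamma)(x)$ puts $\alpha(g)(x)$ in the $\beta(\Gamma)$-orbit of $x$, and freeness of $\beta$ yields a unique $\omega(g,x)\in\Gamma$ with $\alpha(g)(x)=\beta(\omega(g,x))(x)$. This is a Borel map $\omega\colon G\times X\to\Gamma$ satisfying the cocycle identity $\omega(gh,x)=\omega\bigl(g,\alpha(h)(x)\bigr)\,\omega(h,x)$, and freeness of $\alpha$ forces $g\mapsto\omega(g,x)$ to be injective for a.e.\ $x$. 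Being injective it is a proper map $G\to\Gamma$, hence extends to a topological embedding $\omega_x\colon\beta G\hookrightarrow\beta\Gamma$ carrying $\partial G$ homeomorphically onto a subset of $\partial\Gamma$. The guiding picture is that, fibrewise over $X$, these embeddings identify the boundary of the $G$-orbit relation on $X$ with an invariant sub-object of the boundary of the $\Gamma$-orbit relation restricted to $X$ — the cocycle-twisted, fibred analogue of the elementary fact that $\mathcal S$ passes to subgroups because $\partial G$ sits as a $G\times G$-invariant closed subspace of $\partial\Gamma$.

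\emph{Transport.} By Definition~\ref{Definition; The class S}, $\Gamma\in\mathcal S$ gives continuous maps $\xi_n\colon\partial\Gamma\to\mathrm{Prob}(\Gamma\times\Gamma)$ with $\sup_{z}\bigl\|(s,t)\cdot\xi_n(z)-\xi_n((s,t)z)\bigr\|_1\to0$ for each $(s,t)\in\Gamma\times\Gamma$. I would pull these back fibrewise along $\omega_x$, transport the $\mathrm{Prob}(\Gamma\times\Gamma)$-data to $\mathrm{Prob}(G\times G)$-data using the identification $G\cong\omega(G,x)$ afforded by injectivity, and integrate over $x\in X$ against $\mu$ — a probability measure, since $\mu(X)=1$ — to obtain honest continuous maps $\eta_n\colon\partial G\to\mathrm{Prob}(G\times G)$. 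The cocycle identity together with the measure-preservation of $\alpha$ should then give $\sup_\xi\bigl\|(s,t)\cdot\eta_n(\xi)-\eta_n((s,t)\xi)\bigr\|_1\to0$ for each $(s,t)\in G\times G$, i.e.\ topological amenability of $G\times G\curvearrowright\partial G$, hence $G\in\mathcal S$. By ergodic decomposition one may assume $\alpha$ ergodic, and exactness of $G$, needed to handle the left-translation part cleanly, is available since exactness is an ME invariant (cf.\ Remark~\ref{Remark; Exactness is an ME invariant}).

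\emph{Main obstacle.} The essential difficulty is the tension between the purely topological content of $\mathcal S$ and the fact that $\omega$, hence the family $x\mapsto\omega_x$, is only measurable in $x$: the fibrewise data do not assemble into a single continuous map, so the integration over $X$ must be justified by hand. Two points need care — that the total mass of the fibrewise-transported sub-probability measures tends to $1$ in $L^1(X,\mu)$, which uses injectivity of $\omega(\cdot,x)$ and measure-preservation of $\alpha$ and is the technical heart of the argument, and that averaging over $X$ does not spoil the asymptotic equivariance, for which the cocycle identity must be fed into a mean-ergodic estimate. A clean way to organise this is to introduce a measured (groupoid) version of class $\mathcal S$: show $\Gamma\in\mathcal S$ implies this measured property for $\Gamma\ltimes Y$ by induction of actions, that it restricts to $X$ and descends to the subgroupoid $G\ltimes X$ through the finite-to-one cocycle $\omega$ (where the invariant-sub-object mechanism makes the descent painless), and finally that for a free ergodic m.p.\ groupoid this measured property is equivalent to $G\in\mathcal S$ for the underlying group — this last step, the measurable-to-topological passage, being the substantive input.
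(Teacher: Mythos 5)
Your strategy is genuinely different from the paper's, which never touches $\partial G$ or $\partial\Gamma$ directly: the paper works operator-algebraically, using the nuclearity of $(\ell_\infty\Gamma/c_0\Gamma\otimes L^\infty Y\otimes JL^\infty YJ)\rtimes_{\mathrm{red}}(\Gamma\times\Gamma)$ to get a $*$-homomorphism $B\otimes JBJ\to(D+K)/K$ (Proposition~\ref{Proposition; ME Continuity for Gamma}), the $C^*$-characterization of $\mathcal S$ via a c.p.\ map $C^*_\lambda(G)\otimes C^*_\rho(G)\to\mathcal B(\ell_2G)$ modulo compacts (Proposition~\ref{Proposition; Continuity for G}), a projection lemma matching the filtrations $\{e(\Gamma_0)\}$ and $\{f(G_0)\}$, and Choi--Effros/Arveson lifting. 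Unfortunately your dynamical transport, as sketched, has two gaps at exactly the points you flag, and at least one of them is not merely unproven but false as stated.

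First, the maps $\omega_x\colon\beta G\to\beta\Gamma$ do not intertwine the boundary actions. The cocycle identity $\omega(gh,x)=\omega(g,\alpha(h)(x))\,\omega(h,x)$ says that left translation by $g$ acts on the image set $\omega(G,x)$ by left multiplication by an element of $\Gamma$ that \emph{depends on the point} $\omega(h,x)$; for a boundary point $z=\lim h_i$ the elements $\omega(g,\alpha(h_i)(x))$ range over a countably infinite set in general and need not stabilize along the ultrafilter, so $\omega_x(g\cdot z)$ need not equal $\gamma\cdot\omega_x(z)$ for any single $\gamma\in\Gamma$. Taming precisely this is the role of Lemma~\ref{Lemma; ME Projections}: only after cutting to a subset $q_n$ of $X$ does $\omega(g,\cdot)$ take finitely many values for $g$ in a finite set, and this holds only on $q_n\neq p$, which is why the paper must work with the compressions $q_nJq_nJ$ and realize the target map as a limit of liftable maps rather than by a single fibrewise formula. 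Second, the total-mass claim is the real obstruction: approximate invariance of $\xi_n$ under $\Gamma\times\Gamma$-translations gives no control whatsoever on the mass that $\xi_n(\omega_x(z))$ assigns to the set $\omega(G,x)\times\omega(G,x)$, which is a sparse, non-coarsely-dense subset of $\Gamma\times\Gamma$ when $\mu(Y)>\mu(X)$. For $\Gamma$ free, the standard $\xi_n(w)$ concentrate on geodesics toward $w$, and $\omega(G,x)$ has no reason to meet them; averaging over $x\in X$ does not repair this. So the restricted measures can have mass near $0$, and the normalization needed to produce $\eta_n\in\mathrm{Prob}(G\times G)$ destroys the asymptotic invariance. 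Your closing suggestion --- a measured-groupoid version of $\mathcal S$ together with a ``measurable-to-topological passage'' back to $G\in\mathcal S$ --- is not a fix but a restatement of the whole difficulty as an unproved lemma; that passage is exactly what Propositions~\ref{Proposition; ME Continuity for Gamma} and~\ref{Proposition; Continuity for G} accomplish on the $C^*$-side.
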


In this proposition, $Y$ can be an infinite standard measure space.

\begin{proof}[Proof of Theorem~\ref{Theorem; S is MEinvariant}
from Proposition~\ref{Proposition; S is MEinvariant in SOE setting}]
Suppose that $(\Sigma, \nu)$ gives an ergodic measure embedding of $G$ into $\Gamma$. Choose $G$ fundamental domain $Y$ and $\Gamma$ fundamental domain $X$.
We can replace $\Sigma$ so that the $G$-action on
$\Gamma \backslash \Sigma \cong X$ is free, by taking a product of $\Sigma$ and a $G$-probability space on which free m.p.~$G$-action is given.
Then we can consider $\Sigma$ comes from a stable orbit equivalence with
constant $s = \nu(Y) / \nu(X) \in (0, \infty]$.
This argument is covered by Lemma 3.2 in \cite{Furman; OE rigidity}
and Remark 2.14 in \cite{Monod--Shalom} if $s < \infty$.
We do the same argument in the case of $s = \infty$.

We note that $\Gamma \in \mathcal{S}$ if and only if $\Gamma \times \mathbb{Z} / n \mathbb{Z} \in \mathcal{S}$. Replacing $\Gamma$ with $\Gamma \times \mathbb{Z} / n \mathbb{Z}$, if necessary, we get
$\mathcal{R}(G \curvearrowright X)^s \cong \mathcal{R}(\Gamma \curvearrowright Y)$
with constant $1 \le s$, where $\mathcal{R}(G \curvearrowright X),  \mathcal{R}(\Gamma \curvearrowright Y)$ mean the orbit equivalence relations of
free m.p.\ actions of $G$ and $\Gamma$.
By Proposition~\ref{Proposition; S is MEinvariant in SOE setting}, $\Gamma \in \mathcal{S}$ implies $G \in \mathcal{S}$.
\end{proof}

To prove Proposition~\ref{Proposition; S is MEinvariant in SOE setting}, we use the notation given as follows.

We introduce a measure $\nu$ on $\mathcal{R}_\beta$ as the push forward under the map
\begin{eqnarray*}
    Y \times \Gamma \ni (y, \gamma) \mapsto (\alpha(\gamma)(y), y) \in \mathcal{R}_\beta,
\end{eqnarray*}
where the measure on $Y \times \Gamma$ is given by the product of $\mu$ and the counting measure.
The measure $\nu$ coincides with the measure defined in Feldman and Moore \cite{Feldman--Moore; II}.

The action $\beta$ (resp.~$\alpha$) gives a group action of $\Gamma$ (resp.~$G$)
on $L^\infty(Y)$ (resp.~$L^\infty(X)$).
We use the same notation $\beta$ (resp.~$\alpha$) for this action. Let $p \in L^\infty(Y)$ be
the characteristic function of $X$. The algebra $L^\infty(Y)$ and the group $\Gamma$ are represented on $L^2(\mathcal{R}_\beta, \nu)$ as
\begin{eqnarray*}
    ( F \xi ) (x, y) &=& F(x) \xi(x, y), \quad F \in L^\infty(Y),\\
    ( u_\gamma \xi ) (x, y) &=& \xi(\beta(\gamma^{-1})(x), y),
     \quad \gamma \in \Gamma, \xi \in L^2(\mathcal{R}_\beta),
     (x, y) \in \mathcal{R}_\beta.
\end{eqnarray*}
Let $B$ be the $C^\ast$-algebra generated by the images, which is the reduced crossed product algebra
$B = L^\infty(Y) \rtimes_{\rm red} \Gamma$.
Its weak closure is the group measure space construction
$\mathcal{M} = L^\infty(Y) \rtimes \Gamma$ \cite{Murray--vN; IV}.
We denote by $\mathrm{tr}$ the canonical faithful normal semi-finite trace on $\mathcal{M}$ with normalization $\mathrm{tr}(p) = 1$.
The unitary involution $J$ of $(\mathcal{M}, \mathrm{tr})$ is written as
$(J \xi ) (x,y) = \overline{\xi(y, x)}$, $(\xi \in L^2(\mathcal{R}_\beta)$, $(x, y) \in \mathcal{R}_\beta)$.

The group $G$ is represented on
$p L^2(\mathcal{R}_\beta) = L^2(\mathcal{R}_\beta \cap (X \times Y))$ by
\begin{eqnarray*}
    ( v_g \xi ) (x, y) &=& \xi(\alpha(g^{-1})(x), y),
                \quad g \in G, \xi \in p L^2(\mathcal{R}_\beta),
                (x, y) \in \mathcal{R}_\beta \cap (X \times Y).
\end{eqnarray*}
We denote by $C^*_\lambda (G)$ the $C^\ast$-algebra generated by these operators.
The algebra is isomorphic to the reduced group $C^*$-algebra of $G$.
The Hilbert space $L^2(\mathcal{R}_\alpha, \nu)$ can be
identified with a closed subspace of $p L^2(\mathcal{R}_\beta)$.
The algebra $C^*_\lambda (G)$ is also represented on $L^2(\mathcal{R}_\alpha)$ faithfully.
We denote by $P$ the orthogonal projection from $L^2(\mathcal{R}_\beta)$ onto $L^2(\mathcal{R}_\alpha)$.
We note that the algebra $p B p$ does not contain $C^*_\lambda (G)$ in general,
although there exists an inclusion between their weak closures.

Let $e_\Delta$ be the projection from $L^2(\mathcal{R}_\beta)$ onto the set of $L^2$-functions
supported on the diagonal subset of $\mathcal{R}_\beta$. This is the Jones projection for $L^\infty(Y) \subset \mathcal{M}$.
For $\gamma \in \Gamma$ and a finite subset $\Gamma_0 \subset \Gamma$, we define the projections
$e(\gamma), e(\Gamma_0)$ by
\begin{eqnarray*}
    e(\gamma) = J u_\gamma J e_\Delta J u_\gamma^* J
              , \quad
    e(\Gamma_0) = \sum_{\gamma \in \Gamma_0} e(\gamma).
\end{eqnarray*}
For $g \in G$ and a finite subset $G_0 \subset G$, we define the projections $f(g), f(G_0)$ by
\begin{eqnarray*}
    f(g) = v_g e_\Delta v_g^* = v_g p e_\Delta v_g^*
         , \quad
    f(G_0) = \sum_{g \in G_0} f(g).
\end{eqnarray*}

Let $K \subset \mathcal{B}(L^2(\mathcal{R}_\beta))$ be the hereditary subalgebra of $\mathcal{B}(L^2(\mathcal{R}_\beta))$
with approximate units $\{ e(\Gamma_0) \ | \ \Gamma_0 \subset \Gamma {\rm \ finite } \}$,
that is,
\begin{eqnarray*}
    K =
    \overline{\bigcup_{\Gamma_0} e(\Gamma_0) \mathcal{B}(L^2(\mathcal{R}_\beta)) e(\Gamma_0)}^{\ \| \cdot \|}.
\end{eqnarray*}
It is easy to see that $B$ and $J B J$ are in the multiplier of $K$, so is $D = C^\ast(B, JBJ)$.

The algebra $B$ satisfies the following continuity property, which is similar to Proposition 4.2 in \cite{Ozawa; Kurosh}.

\begin{proposition}\label{Proposition; ME Continuity for Gamma}
The $*$-homomorphism
\begin{eqnarray*}
\widetilde{\Psi} \colon B \otimes_{\mathbb{C}} J B J \rightarrow (D + K) / K
\end{eqnarray*}
given by $\widetilde{\Psi}(b \otimes c) = bc + K$ is continuous with respect to the minimal tensor norm.
\end{proposition}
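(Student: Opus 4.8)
The plan is to adapt the proof of Proposition~4.2 in \cite{Ozawa; Kurosh} to the present crossed-product picture: the aim is to produce a unital completely positive (u.c.p.) map that implements $\widetilde\Psi$ modulo $K$. Concretely, it suffices to construct a u.c.p.\ map
\begin{eqnarray*}
\Theta \colon B \otimes_{\min} JBJ \longrightarrow \mathcal{B}(L^2(\mathcal{R}_\beta))
\end{eqnarray*}
with $\Theta(b \otimes JcJ) - bJcJ \in K$ for all $b, c \in B$. Granting this, the proposition follows formally. Since $B$ and $JBJ$ lie in the multiplier algebra of $K$, the set $D + K$ is a $C^\ast$-subalgebra of $\mathcal{B}(L^2(\mathcal{R}_\beta))$; a u.c.p.\ map on a $C^\ast$-algebra is automatically contractive, so $\Theta$ is $\|\cdot\|_{\min}$-continuous; on elementary tensors $\Theta(b \otimes JcJ) + K = bJcJ + K = \widetilde\Psi(b \otimes JcJ)$; and since the elementary tensors span a dense subspace, $\Theta$ maps $B \otimes_{\min} JBJ$ into $\overline{D + K} = D + K$, and its composition with the quotient map $D + K \to (D+K)/K$ agrees with $\widetilde\Psi$ on the algebraic tensor product. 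Hence $\widetilde\Psi$ extends continuously.

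To build $\Theta$ I would feed in the hypothesis $\Gamma \in \mathcal{S}$ through topological amenability \cite{Anantharaman-Delaroche}: amenability of the $\Gamma \times \Gamma$-action on $\beta\Gamma \cap \Gamma^c$ yields a net of finitely supported maps $\mu_i \colon \Gamma \to \mathrm{Prob}(\Gamma \times \Gamma)$ such that for each $(s,t) \in \Gamma \times \Gamma$ and $\varepsilon > 0$ there are an index $i_0$ and a finite set $F \subset \Gamma$ with $\|(s,t) \cdot \mu_i(x) - \mu_i(sxt^{-1})\|_1 < \varepsilon$ whenever $i \ge i_0$ and $x \notin F$ (here $(s,t)\cdot\mu_i(x)$ denotes the pushforward measure). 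Using that $\beta$ is free, one identifies each $\Gamma$-orbit in $Y$ with $\Gamma$; then $\xi_i := \mu_i^{1/2}$ becomes a measurable field of pointwise-unit vectors and produces isometries $W_i \colon L^2(\mathcal{R}_\beta) \to L^2(\mathcal{R}_\beta) \otimes \mathcal{H}_i$ together with amplified representations of $B$ and $JBJ$ on $L^2(\mathcal{R}_\beta) \otimes \mathcal{H}_i$ under which the product $bJcJ$ is implemented fibrewise. Set $\Theta_i := W_i^\ast\, \pi_i(\cdot)\, W_i$, where $\pi_i$ is the resulting $\ast$-representation of $B \otimes_{\min} JBJ$ on $L^2(\mathcal{R}_\beta) \otimes \mathcal{H}_i$; each $\Theta_i$ is u.c.p., and I take $\Theta$ to be a point-weak${}^\ast$ cluster point of $(\Theta_i)$, which is again u.c.p.

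It remains to verify $\Theta(b \otimes JcJ) - bJcJ \in K$, and by linearity and norm density it is enough to treat $b = F u_\gamma$, $c = F' u_\delta$ with $F, F' \in L^\infty(Y)$ and $\gamma, \delta \in \Gamma$. For such elements the defect $\Theta_i(b \otimes JcJ) - bJcJ$ is controlled, in norm, by the non-equivariance of $\mu_i$ at the element $(s,t) \in \Gamma \times \Gamma$ determined by $\gamma$ and $\delta$; the corona-amenability above then says precisely that, for $i$ large, this defect lies within arbitrarily small norm of an operator supported, in the fibre direction, on a finite subset of $\Gamma$, hence within $\varepsilon$ of $e(\Gamma_0)\, \mathcal{B}(L^2(\mathcal{R}_\beta))\, e(\Gamma_0) \subset K$ for some finite $\Gamma_0 \subset \Gamma$. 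Letting $i$ run and then $\varepsilon \to 0$ places $\Theta(b \otimes JcJ) - bJcJ$ in $\overline{K} = K$. Combined with the first paragraph, this proves the proposition.

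The step I expect to be the main obstacle is turning the combinatorial boundary data $\mu_i \colon \Gamma \to \mathrm{Prob}(\Gamma \times \Gamma)$ into honest operators on $L^2(\mathcal{R}_\beta)$ that are simultaneously compatible with the left action of $B$ and the right action of $JBJ$, and --- above all --- matching the ``$c_0\Gamma$'' built into the definition of $\beta\Gamma \cap \Gamma^c$ with the hereditary algebra $K$, so that the non-equivariance error is genuinely absorbed by $K$ rather than merely made small in norm. The remaining points (that the $W_i$ are isometries, that $\Theta_i$ and $\Theta$ are u.c.p., and that $D + K$ is a $C^\ast$-algebra) are routine.
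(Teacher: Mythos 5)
Your overall target --- a contractive c.p.\ map $\Theta$ on $B \otimes_{\min} JBJ$ whose defect $\Theta(b\otimes JcJ) - bJcJ$ lies in $K$ --- would indeed imply the proposition, but the way you produce $\Theta$ has a genuine gap at the last step. For each $i$ you only obtain $\mathrm{dist}\bigl(\Theta_i(b\otimes JcJ) - bJcJ,\, K\bigr) \to 0$, i.e.\ $\Theta_i(b\otimes JcJ) - bJcJ = k_i + r_i$ with $k_i \in K$ and $\|r_i\| \to 0$. Passing to a point-weak${}^\ast$ cluster point destroys this: $K$ is a norm-closed hereditary subalgebra whose approximate unit $e(\Gamma_0)$ increases strongly to $1$, so $K$ is very far from being weak${}^\ast$-closed, and the weak${}^\ast$ limit of the $k_i$ need not lie in $K$. ``Letting $i$ run and then $\varepsilon \to 0$'' therefore does not place the defect of $\Theta$ in $\overline{K} = K$. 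A secondary point in the same step: since $K$ is hereditary and not an ideal of $\mathcal{B}(L^2(\mathcal{R}_\beta))$, you need the error to be of the form $e(\Gamma_0)\, x\, e(\Gamma_0)$ (compressed on \emph{both} sides by a single finite set), which requires controlling the finitely supported part of the non-equivariance simultaneously for the left and the right actions; this is exactly the part you defer, and it is where the identification of $c_0\Gamma$ with $K$ has to be carried out. Finally, the claim that each $\pi_i$ is a representation of the \emph{minimal} tensor product (rather than of the algebraic one) is asserted but is precisely the content one is trying to prove, so it cannot be left as routine.

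The paper takes a softer route that sidesteps both issues by never leaving the quotient. One represents $\ell_\infty\Gamma$ by diagonal multiplication operators $m_\phi$ on $L^2(\mathcal{R}_\beta)$ and observes that $m^{-1}(K \cap \mathrm{image}(m)) = c_0\Gamma$, so $\ell_\infty\Gamma/c_0\Gamma$ embeds into $(\widetilde D + K)/K$ --- this is the clean form of your ``matching $c_0\Gamma$ with $K$.'' Setting $E = \ell_\infty\Gamma/c_0\Gamma \otimes L^\infty Y \otimes JL^\infty Y J$, the $\Gamma\times\Gamma$-action on $E$ is amenable because its restriction to the \emph{central} subalgebra $\ell_\infty\Gamma/c_0\Gamma$ is (this is where $\Gamma\in\mathcal{S}$ enters), so by Anantharaman-Delaroche the full and reduced crossed products of $E$ coincide and $F = E \rtimes_{\mathrm{red}}(\Gamma\times\Gamma)$ is nuclear. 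The covariant pair given by the embedding of $E$ and the unitaries $u_\gamma$, $Ju_\gamma J$ then yields a $*$-homomorphism $F \to (\widetilde D + K)/K$ by the universal property of the full crossed product, and restricting to $B \otimes JBJ \subset (L^\infty Y \otimes JL^\infty Y J)\rtimes_{\mathrm{red}}(\Gamma\times\Gamma)$ gives the min-continuity directly. If you want to keep your hands-on construction, the fix is to make the same move: estimate $\|(1 - e(\Gamma_0))\sum_k b_k Jc_k J (1 - e(\Gamma_0))\| \le \|z\|_{\min} + \varepsilon$ for $z = \sum_k b_k \otimes Jc_k J$ using $\Theta_i$ for a single large $i$ and a single large $\Gamma_0$, i.e.\ prove the norm inequality in $(D+K)/K$ directly rather than manufacturing a limit map $\Theta$ into $\mathcal{B}(L^2(\mathcal{R}_\beta))$.
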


For the rest of this paper, $\otimes$ stands for the minimal tensor product between two $C^*$-algebras.

\begin{proof}
Consider the representation of $\ell_\infty \Gamma$ on $L^2(\mathcal{R}_\beta)$ given by
\begin{eqnarray*}
    (m_{\phi} \xi)(\gamma y, y) = \phi(\gamma) \xi (\gamma y, y), \quad \phi \in \ell_\infty \Gamma, \xi \in L^2(\mathcal{R}_\beta), \gamma \in \Gamma, y \in Y.
\end{eqnarray*}
Let $\widetilde{D}$ be the $C^*$-algebra generated by $D$ and $m(\ell_\infty \Gamma)$. The algebra $\widetilde{D}$ is also in the multiplier of $K$.
Since preimage $m^{-1}(K \cap \mathrm{image}(m))$ is $c_0 \Gamma$, $m$ also gives an embedding of $\ell_\infty \Gamma / c_0 \Gamma$ into $(\widetilde{D} + K )/ K$. The embedding $m$ and the representation of $L^\infty Y$ give a  $*$-homomorphism
\begin{eqnarray*}
     \widetilde{\Psi} \colon E = \ell_\infty \Gamma / c_0 \Gamma \otimes L^\infty Y \otimes J L^\infty Y J
    \longrightarrow (\widetilde{D} + K )/ K.
\end{eqnarray*}
Here, we used the fact that abelian $C^\ast$-algebras are nuclear (\cite{Takesaki}). Consider the action of $\Gamma \times \Gamma$ on $E$ given by \begin{eqnarray*}
    \mathfrak{A}(\gamma_1, \gamma_2)((\phi + c_0 \Gamma) \otimes f_1 \otimes J f_2 J) = (l_\gamma r_\lambda \phi +c_0 \Gamma) \otimes \beta(\gamma_1) (f_1) \otimes J \beta(\gamma_2)(f_2) J,
\end{eqnarray*}
where $l_\cdot$ and $r_\cdot$ stand for the left and the right translations.
Since $\ell_\infty \Gamma / c_0 \Gamma$ is in the center of $E$, by \cite{Anantharaman-Delaroche}, the full crossed product coincides with the reduced crossed product $F = E \rtimes_{\mathrm{red}} (\Gamma \times \Gamma)$ and this is nuclear.
The unitary representations $u_\cdot$ and $J u_\cdot J$ give a $*$-homomorphism
$\widetilde{\Psi} \colon F
    \rightarrow (\widetilde{D} + K )/ K$.
By restricting $\widetilde{\Psi}$ on $(L^\infty Y \otimes J L^\infty Y J) \rtimes_{\mathrm{red}} (\Gamma \times \Gamma)$, we get a map satisfying Proposition \ref{Proposition; ME Continuity for Gamma}.
\end{proof}

If the $\ast$-homomorphism $\Psi \colon B \otimes_{\mathbb{C}} J B J$ given by $\Psi(b \otimes c) = bc$ is continuous with respect to the minimal tensor norm, then the group $\Gamma$ is amenable. The above proposition can be regarded as
a weakened amenability property for the $\Gamma$-action.

We make use of the following characterization
of $\mathcal{S}$. Proposition 15.2.3 and a variant of Lemma 15.1.4 in \cite{Brown--Ozawa; Approximation} imply the following.

\begin{proposition}\label{Proposition; Continuity for G}
A countable group $G$ is in $\mathcal{S}$
if and only if $G$ is exact and there exists a contractive c.p.~map
$\Phi \colon C^\ast_{\lambda}(G) \otimes C^\ast_{\rho}(G)
\rightarrow \mathcal{B}(\ell_2 G)$,
satisfying
\begin{eqnarray*}
\Phi(b \otimes c) - bc \in \mathcal{K}(\ell_2 G), \quad b \in C^\ast_{\lambda}(G), c \in C^\ast_{\rho}(G),
\end{eqnarray*}
where $C^\ast_{\lambda}(G)$ and $C^\ast_{\rho}(G)$ are
the $C^\ast$-algebras
generated by the left and right regular representations, respectively.
\end{proposition}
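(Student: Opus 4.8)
The plan is to read this off from the theory of bi-exact groups in \cite{Brown--Ozawa; Approximation}. By Definition~\ref{Definition; The class S}, $G\in\mathcal{S}$ means precisely that the left-times-right action of $G\times G$ on $\partial G:=\beta G\cap G^{c}$ is topologically amenable; the asserted equivalence is then essentially Proposition~15.2.3 of \cite{Brown--Ozawa; Approximation}, and the ``variant of Lemma~15.1.4'' is needed only to reconcile their formulation --- stated one-sidedly, or with values in the Calkin algebra $\mathcal{B}(\ell_2 G)/\mathcal{K}(\ell_2 G)$ rather than in $\mathcal{B}(\ell_2 G)$ --- with the one above. I would prove the two implications separately.

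Suppose first $G\in\mathcal{S}$. Exactness of $G$ is immediate: restricting the amenable $G\times G$-action on $\partial G$ to $G\times\{e\}\cong G$ gives an amenable $G$-action on $\partial G$, which glues with the amenable translation action on the open invariant piece $G\subset\beta G$ to an amenable $G$-action on $\beta G$. For the c.p.\ map, represent $\ell_\infty G$ (by multiplication), $C^{\ast}_{\lambda}(G)$ and $C^{\ast}_{\rho}(G)$ on $\ell_2 G$. The covariant pair $(\ell_\infty G, \lambda\times\rho)$ for the left-times-right action induces a surjection of the full crossed product $\ell_\infty G\rtimes(G\times G)$ onto $C^{\ast}(\ell_\infty G,\lambda(G),\rho(G))$ which carries the ideal $c_0 G\rtimes(G\times G)$ into $\mathcal{K}(\ell_2 G)$, hence descends to a $\ast$-homomorphism $\bar\pi\colon C(\partial G)\rtimes(G\times G)\to\mathcal{B}(\ell_2 G)/\mathcal{K}(\ell_2 G)$. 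By amenability of $G\times G\curvearrowright\partial G$, the domain $C(\partial G)\rtimes(G\times G)$ coincides with the reduced crossed product and is nuclear \cite{Anantharaman-Delaroche}. Since $C(\partial G)$ is unital, the closed span of the canonical unitaries $u_{(g,h)}$ inside it is a copy of $C^{\ast}_{\lambda}(G\times G)\cong C^{\ast}_{\lambda}(G)\otimes C^{\ast}_{\rho}(G)$; choose a separable nuclear $C^{\ast}$-subalgebra $N_0$ of the (nuclear) crossed product containing this copy, lift $\bar\pi|_{N_0}$ by the Choi--Effros lifting theorem to a contractive c.p.\ map $N_0\to\mathcal{B}(\ell_2 G)$, and restrict it to the copy of $C^{\ast}_{\lambda}(G)\otimes C^{\ast}_{\rho}(G)$. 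This yields a contractive c.p.\ map $\Phi$; since $\bar\pi$ is a $\ast$-homomorphism sending $\lambda(g)\otimes\rho(h)$ to $\lambda(g)\rho(h)+\mathcal{K}(\ell_2 G)$, one gets $\Phi(b\otimes c)-bc\in\mathcal{K}(\ell_2 G)$ for all $b,c$. Together with exactness, this is the displayed condition.

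Conversely, suppose $G$ is exact and such a $\Phi$ exists. Exactness (Ozawa) provides a net of finitely supported probability-measure-valued maps on $\beta G$ that are almost equivariant for $G\curvearrowright\beta G$, and hence, via the $J$-symmetry, also for the right translation action. Taking a Stinespring dilation of $\Phi$ and combining it with these two families produces, over $\partial G$, finitely supported probability-measure-valued maps that are almost equivariant for the genuine $G\times G$-action --- a witness to its amenability, hence $G\in\mathcal{S}$. This is the mechanism in the proof of Lemma~15.1.4 of \cite{Brown--Ozawa; Approximation}, and carrying it out with $C^{\ast}_{\rho}(G)$ in the role of the second tensor factor is the required variant. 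I expect the converse to be the main obstacle: the forward direction uses only soft permanence properties of amenable actions together with Choi--Effros (modulo the cosmetic point that $C(\partial G)=\ell_\infty G/c_0 G$ is non-separable, which forces the passage to a separable nuclear subalgebra), whereas the converse must manufacture almost-invariant sections on the boundary out of the single operator-algebraic datum $\Phi$, and arranging the averaging inside the Stinespring dilation so that it respects the $G\times G$-action is exactly where the content of Lemma~15.1.4 lies.
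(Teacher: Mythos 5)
The paper offers no argument for this proposition beyond the citation of Proposition~15.2.3 and a variant of Lemma~15.1.4 of \cite{Brown--Ozawa; Approximation}, and your proposal reconstructs exactly that route: amenability of $G\times G\curvearrowright \beta G\cap G^{c}$ giving nuclearity of $C(\beta G\cap G^{c})\rtimes (G\times G)$ plus a Choi--Effros lifting for the forward direction, and the $\mathrm{Prob}(G)$-valued almost-equivariance mechanism of Lemma~15.1.4 for the converse. The details you supply for the forward implication are correct (the lifting goes through because the restricted map factors through a nuclear algebra, hence is a nuclear c.p.\ map), and your deferral of the converse to the cited lemma is no less complete than the paper's own treatment.
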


\begin{remark}
\label{Remark; Exactness is an ME invariant}
By using the notion of weak exactness introduced in Kirchberg \cite{Kirchberg; Weak Exactness},
we get that the exactness on $\Gamma$ implies that on $G$.
Indeed, the algebra $\mathcal{M}$ is weakly exact by the exactness of $\Gamma$.
The subalgebra $L(G) \subset p \mathcal{M} p$ is also weakly exact.
Thus $G$ is exact by Ozawa's theorem \cite{Ozawa; Weakly Exact von Neumann algebras}.
\end{remark}

We have only to show the existence of $\Phi$ in Proposition
\ref{Proposition; Continuity for G}.
However, lack of inclusion ``$C^*_\lambda (G) \subset B$'' requires some technical elaboration.
\begin{lemma}\label{Lemma; ME Projections}
There exists a sequence $\{ q_n \}_{n = 1, 2, \cdots}$ of projections in $L^\infty(X)$
satisfying:
\begin{enumerate}
    \item
        The sequence $\{ q_n \}$ is increasing and strongly converges to $p$;
    \item
        For any finite subset $G_0 \subset G$ and $n$, there exists a finite subset $\Gamma_0 \subset \Gamma$
        satisfying
            $q_n f(G_0) \le e(\Gamma_0)$;
    \item
        For any finite subset $\Gamma_0 \subset \Gamma$ and $n$, there exists a finite subset $G_0 \subset G$
        satisfying
            $q_n e(\Gamma_0) P \le f(G_0)$.
\end{enumerate}
\end{lemma}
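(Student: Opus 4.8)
The plan is to realize every operator occurring in the statement as a multiplication operator on $L^2(\mathcal R_\beta,\nu)$ by the indicator of an explicit measurable subset of $\mathcal R_\beta$; since all such operators commute, the two inequalities to be proved become inclusions of subsets, and the whole problem collapses to a soft measure-theoretic argument. Write $M_{\mathbf 1_S}$ for the operator $\xi\mapsto\mathbf 1_S\cdot\xi$, where $S\subseteq\mathcal R_\beta$ is measurable. Unwinding the formulas for $u_\gamma$, $v_g$, $J$ and $e_\Delta$ gives
$$
e_\Delta=M_{\mathbf 1_{\{x=y\}}},\qquad
e(\gamma)=M_{\mathbf 1_{\{(x,y)\in\mathcal R_\beta\,:\,y=\beta(\gamma)(x)\}}},\qquad
f(g)=M_{\mathbf 1_{\{(x,y)\,:\,y=\alpha(g^{-1})(x)\}}},
$$
and, since $\alpha$ acts on $X$, the last one is supported in $\mathcal R_\alpha\subseteq X\times X$. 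By freeness of $\beta$ (resp.\ $\alpha$) the graphs involved are pairwise disjoint, so $e(\Gamma_0)=M_{\mathbf 1_{\{(x,y)\,:\,y\in\beta(\Gamma_0)(x)\}}}$ and $f(G_0)=M_{\mathbf 1_{\{(x,y)\in\mathcal R_\alpha\,:\,y\in\alpha(G_0^{-1})(x)\}}}$; moreover $P=M_{\mathbf 1_{\mathcal R_\alpha}}$ and $q=M_{\mathbf 1_{\{(x,y)\,:\,x\in A\}}}$ for $q=\mathbf 1_A\in L^\infty(X)$. Hence $q_nf(G_0)$ and $q_ne(\Gamma_0)P$ are again projections of this form, and (2), (3) are equivalent, respectively, to the $\nu$-a.e.\ inclusions
\begin{align*}
\{(x,y)\in\mathcal R_\alpha:\ x\in A_n,\ y\in\alpha(G_0^{-1})(x)\}&\subseteq\{(x,y)\in\mathcal R_\beta:\ y\in\beta(\Gamma_0)(x)\},\\
\{(x,y)\in\mathcal R_\alpha:\ x\in A_n,\ c(x,y)\in\Gamma_0\}&\subseteq\{(x,y)\in\mathcal R_\alpha:\ y\in\alpha(G_0^{-1})(x)\},
\end{align*}
where $c\colon\mathcal R_\beta\to\Gamma$ is the Borel cocycle determined by $y=\beta(c(x,y))(x)$.

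Next I would bring in two families of Borel maps measuring how $\alpha$-orbits sit inside $\beta$-orbits. For $g\in G$ set $\sigma_g\colon X\to\Gamma$, $\sigma_g(x)=c(x,\alpha(g)(x))$ (well defined a.e.\ since $\alpha(G)(x)\subseteq\beta(\Gamma)(x)$ and $\beta$ is free). For $\gamma\in\Gamma$ set $D_\gamma=\{x\in X:\beta(\gamma)(x)\in\alpha(G)(x)\}$ and let $\tau_\gamma\colon D_\gamma\to G$ be the unique element with $\alpha(\tau_\gamma(x))(x)=\beta(\gamma)(x)$ (unique by freeness of $\alpha$). For $(x,y)\in\mathcal R_\alpha$ the unique $h\in G$ with $y=\alpha(h)(x)$ satisfies $c(x,y)=\sigma_h(x)$, and $\sigma_h(x)=\gamma$ is equivalent to $x\in D_\gamma$, $\tau_\gamma(x)=h$. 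Using this, the two displayed inclusions hold as soon as, for every $n$, every $g\in G$ and every $\gamma\in\Gamma$, the restricted maps $\sigma_g|_{A_n}$ and $\tau_\gamma|_{A_n\cap D_\gamma}$ are essentially finitely valued: given $G_0$ one then takes $\Gamma_0$ to contain all values of $\sigma_{g^{-1}}|_{A_n}$, $g\in G_0$, for (2), and given $\Gamma_0$ one takes $G_0$ to contain the inverses of all values of $\tau_\gamma|_{A_n\cap D_\gamma}$, $\gamma\in\Gamma_0$, for (3).

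It then remains to construct an increasing sequence $A_n\subseteq X$ with $\mu(X\setminus A_n)\to 0$ enjoying this finiteness. Enumerate $G=\{g_k\}_{k\ge1}$ and $\Gamma=\{\gamma_k\}_{k\ge1}$. For each $k$ the level sets of $\sigma_{g_k}$ form a countable Borel partition of $X$ (total mass $1$) and those of $\tau_{\gamma_k}$ a countable Borel partition of $D_{\gamma_k}$ (total mass $\le1$); choose increasing finite sets $\Gamma_{k,1}\subseteq\Gamma_{k,2}\subseteq\cdots\subseteq\Gamma$ and $G_{k,1}\subseteq G_{k,2}\subseteq\cdots\subseteq G$ with $\mu(\sigma_{g_k}^{-1}(\Gamma\setminus\Gamma_{k,m}))<2^{-k-m}$ and $\mu(\tau_{\gamma_k}^{-1}(G\setminus G_{k,m}))<2^{-k-m}$. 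Put
$$
A_n=X\setminus\bigcup_{k\ge1}\Bigl(\sigma_{g_k}^{-1}(\Gamma\setminus\Gamma_{k,n})\ \cup\ \tau_{\gamma_k}^{-1}(G\setminus G_{k,n})\Bigr),\qquad q_n=\mathbf 1_{A_n}\in L^\infty(X).
$$
Then $A_n$ is increasing since the removed set shrinks with $n$, and $\mu(X\setminus A_n)\le 2^{1-n}\to0$, so $q_n\nearrow p$ strongly and (1) holds; and for $x\in A_n$ one has $\sigma_{g_k}(x)\in\Gamma_{k,n}$ and $\tau_{\gamma_k}(x)\in G_{k,n}$ whenever $x\in D_{\gamma_k}$, which is exactly the finiteness required above, giving (2) and (3).

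I expect the genuine work to be entirely in the first step: carefully unwinding $Ju_\gamma J$, $v_g$ and $e_\Delta$ to check that each operator is multiplication by the indicator of the stated set — in particular that $f(g)$ and $e(\Gamma_0)P$ are supported inside $\mathcal R_\alpha$ — since this is exactly the place where the absence of an inclusion $C^\ast_\lambda(G)\subseteq B$ must be handled by hand. Once that dictionary is in place, the projections commute, the inequalities turn into set inclusions, and the construction of $\{q_n\}$ is the routine partition argument above.
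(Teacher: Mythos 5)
Your proof is correct and is essentially the paper's argument: the paper likewise works inside the abelian algebra $L^\infty(\mathcal{R}_\beta)$, writes $e(\Gamma_0)f(g_k)=Q(g_k,\Gamma_0)f(g_k)$ (your $Q(g_k,\Gamma_0)$ is exactly the indicator of $\sigma_{g_k^{-1}}^{-1}(\Gamma_0)$), and chooses finite sets $\Gamma_{k,n}$ with trace defect at most $2^{-(n+k)}$ before summing over $k$. The only cosmetic differences are that you make the $\Gamma_{k,m}$ increasing from the outset and treat (2) and (3) simultaneously, whereas the paper enforces monotonicity afterwards via $q_n=\bigvee_{l\le n}Q_l$ and handles (3) by a separate sequence multiplied in at the end.
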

We note that projections $P, p, e(\gamma), f(g), (\gamma \in \Gamma, g \in G)$ are
in the commutative von Neumann algebra $L^\infty(\mathcal{R}_\beta) \subset \mathcal{B}(L^2(\mathcal{R}_\beta))$.
Every projection in $L^\infty(\mathcal{R}_\beta)$ which is less than $f(g) \ (resp.\ pe(\gamma))$ is
of the form $qf(g) \ (resp.\ q e(\gamma))$ for some $q \in L^\infty(X)$.
We also note that  $q f(g) \le e(\Gamma_0)$ if and only if there exists
a partition $\{ q_\gamma \in L^\infty(X) \ | \ \gamma \in \Gamma_0 \}$ of $q$
such that $q v_g  = \sum_{\gamma \in \Gamma_0} q_\gamma u_\gamma $.

\begin{proof}
We fix an index on $G$; $\{g_1, g_2, \ldots\} = G$. For any $g_k$,
the projection $e(\Gamma_0) f(g_k)$ can be written as $Q(g_k, \Gamma_0) f(g_k)$
by some projection $Q(g_k, \Gamma_0) \in L^\infty(X)$.
The net of projections
$\{ e(\Gamma_0) f(g_k) \ | \ \Gamma_0 \subset \Gamma {\rm \ finite}\}$
strongly converges to $f(g_k)$. For any natural number $n$,
there exists a finite subset $\Gamma_{k, n} \subset \Gamma$
such that $\mathrm{tr}(Q(g_k, \Gamma_{k, n})) \ge 1 - 2^{-(n + k)}$.
Then the projections $Q_n = \bigwedge_{k = 1}^\infty Q(g_k, \Gamma_{k, n})$ satisfy $\mathrm{tr}(Q_n) \ge 1 - 2^{-n}$ and
\begin{eqnarray*}
    Q_n f(g_k) \le Q(g_k, \Gamma_{k, n}) f(g_k) = e(\Gamma_{k, n}) f(g_k) \le e(\Gamma_{k, n}).
\end{eqnarray*}
Let $\{ q_n \}$ be the increasing sequence of projections $\{ \bigvee_{l = 1}^n Q_l \}$. Then we have $\mathrm{tr}(q_n) \ge 1 -2^{-n}$ and
\begin{eqnarray*}
    q_n f(g_k) \le e \left( \bigcup_{l = 1}^n \Gamma_{k,l} \right), \quad g_k \in G.
\end{eqnarray*}
It turned out that the sequence $\{q_n\}$ satisfies $(1)$ and $(2)$.

By a similar technique, we get a sequence $\{ p_n \}$ with $(1)$ and $(3)$, since
\begin{eqnarray*}
    {\rm str}\lim_{G_0} f(G_0) e(\gamma) = e(\gamma) P, \quad \gamma \in \Gamma.
\end{eqnarray*}
Taking products $\{p_n q_n\}$, we get a sequence which satisfies $(1)$, $(2)$ and $(3)$ at the same time.
\end{proof}

The Hilbert space $\ell_2 G$ embeds into $L^2(\mathcal{R}_\alpha)$ by the isometry
\begin{eqnarray*}
    \ell_2 G \ni \delta_g \mapsto v_g \xi_\Delta = J v_g^* J \xi_\Delta \in L^2(\mathcal{R}_\alpha),
\end{eqnarray*}
where the $L^2$-function $\xi_\Delta$ is the characteristic function of the diagonal subset of $\mathcal{R}_\alpha$.
We regard $\ell_2 G$ as a subspace of $L^2(\mathcal{R}_\alpha)$ by this map.
The subspace $\ell_2 G$ is invariant under the action of $C^*_\lambda G$ and $J C^*_\lambda G J$.

\begin{lemma}\label{Lemma; ME Cutting}
For a projection $q \in L^\infty(X)$,
the following inequality on operator norm holds true: $\| (1 - q J q J) |_{\ell_2 G} \| \le (2 - 2\mathrm{tr}(q))^{1 / 2}$.
\end{lemma}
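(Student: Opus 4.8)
The plan is to exploit that $q$ and $JqJ$ are commuting projections on $L^2(\mathcal{R}_\beta)$: from the displayed formula for $J$ and the fact that a projection in $L^\infty(X)$ is a real‑valued function, $q$ acts by multiplication by $q(x)$ in the first variable while $JqJ$ acts by multiplication by $q(y)$ in the second variable. Hence $qJqJ$ is again a projection and one has the decomposition into mutually orthogonal subprojections
\begin{eqnarray*}
    1 - qJqJ \ = \ (1 - q) \ + \ q(1 - JqJ).
\end{eqnarray*}
Applying this to a vector $\xi \in \ell_2 G$ and using that $q$ is a contraction gives
\begin{eqnarray*}
    \| (1 - qJqJ) \xi \|^2 \ = \ \| (1 - q) \xi \|^2 + \| q(1 - JqJ) \xi \|^2 \ \le \ \| (1 - q) \xi \|^2 + \| (1 - JqJ) \xi \|^2 ,
\end{eqnarray*}
so it suffices to prove $\| (1 - q)\xi \|^2 \le (1 - \mathrm{tr}(q)) \| \xi \|^2$ for $\xi \in \ell_2 G$, together with the analogous bound with $JqJ$ in place of $q$.

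For this I would use the orthonormal basis $\{ v_g \xi_\Delta \mid g \in G \}$ of $\ell_2 G$ and compute the matrix coefficients of $q$. Multiplication by $q \in L^\infty(X)$ does not enlarge $L^2$‑support, so $q v_g \xi_\Delta$ remains supported on the graph $\{ (\alpha(g)(y), y) \mid y \in X \} \subset \mathcal{R}_\alpha$; for $g \ne h$ this graph meets the graph supporting $v_h \xi_\Delta$ only in a $\nu$‑null set because $\alpha$ is free, so the off‑diagonal coefficients vanish. On the diagonal, identifying $\nu$ on the graph of $\alpha(g)$ with $\mu$ via $y \mapsto (\alpha(g)(y), y)$ and using that $\alpha(g)$ preserves $\mu$,
\begin{eqnarray*}
    \langle q \, v_g \xi_\Delta, v_g \xi_\Delta \rangle \ = \ \int_X q(\alpha(g)(y)) \, d\mu(y) \ = \ \int_X q \, d\mu \ = \ \mathrm{tr}(q) ,
\end{eqnarray*}
where the last equality uses the normalization $\mathrm{tr}(p) = 1 = \mu(X)$ and the fact that $\mathrm{tr}$ restricts to integration against $\mu$ on $L^\infty(X)$. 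Consequently $\langle q \xi, \xi \rangle = \mathrm{tr}(q) \| \xi \|^2$ for every $\xi \in \ell_2 G$, hence $\| (1 - q)\xi \|^2 = \langle (1 - q)\xi, \xi \rangle = (1 - \mathrm{tr}(q)) \| \xi \|^2$. The computation for $JqJ$ is identical with $q(\alpha(g)(y))$ replaced by $q(y)$ in the integrand, which again integrates to $\mathrm{tr}(q)$, the off‑diagonal terms vanishing for the same support reason. Substituting into the inequality above gives $\| (1 - qJqJ)\xi \|^2 \le 2(1 - \mathrm{tr}(q)) \| \xi \|^2$, and taking the supremum over unit vectors in $\ell_2 G$ yields the lemma.

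The points requiring care rather than real difficulty are: reading off from the formula for $J$ that $JqJ$ is multiplication by $q$ in the second coordinate, using $q = \overline{q}$; checking that $(1 - q)$ and $q(1 - JqJ)$ are orthogonal projections summing to $1 - qJqJ$, so that Pythagoras applies termwise; and recording that $\mathrm{tr}$ agrees with $\int_X \,\cdot\, d\mu$ on $L^\infty(X)$ and that this functional is $\alpha$‑invariant. The only genuinely substantive step is the support/freeness argument that kills the off‑diagonal matrix coefficients of $q$ and of $JqJ$ against the basis $\{ v_g \xi_\Delta \}$, and even this is immediate once one notes that neither $q$ nor $JqJ$ moves $L^2$‑support.
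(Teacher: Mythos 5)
Your proof is correct and rests on exactly the same facts as the paper's: vectors of $\ell_2 G$ are constant on the graphs of the $\alpha(g)$, each of the multiplication operators $q$ and $JqJ$ has ``density'' $\mathrm{tr}(q)$ on each graph by measure preservation of $\alpha$, and a union bound supplies the factor $2$. The paper packages this vector-wise, cutting $\eta$ by the $f(g)$ and bounding $\mu(\{x \in X \mid x \in X_q {\rm \ or\ } \alpha(g)(x) \in X_q\}) \le 2 - 2\mathrm{tr}(q)$, whereas you package it operator-wise via the orthogonal decomposition $1 - qJqJ = (1-q) + q(1 - JqJ)$ and the identity $P_0 q P_0 = \mathrm{tr}(q) P_0 = P_0 JqJ P_0$; this is the same argument in a slightly different (and equally valid) arrangement.
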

\begin{proof}
It suffices to show $\| \eta - q J q J \eta \|^2 \le (2 - 2 \mathrm{tr}(q)) \| \eta \|^2$ for any vector $\eta \in \ell_2 G$.
Since $q J q J P = P q J q J$ and $\eta = P \eta$, we get
\begin{eqnarray*}
         \| \eta - q J q J \eta \|^2
     &=& \sum_{g \in G} \| f(g) (\eta - q J q J \eta) \|^2
      =  \sum_{g \in G} \| f(g) \eta - q J q J f(g) \eta \|^2,\\
         \| \eta \|^2
     &=& \sum_{g \in G} \| f(g) \eta \|^2,
\end{eqnarray*}
The claim reduces to the inequality
$\| f(g) \eta - q J q J f(g) \eta \|^2 \le (2 - 2 \mathrm{tr}(q)) \| f(g) \eta \|^2$.

We note that $\eta$ takes a constant value $\eta(g)$ on the set $\{(\alpha(g)(x), x) \in \mathcal{R}_\alpha \ | \ x \in X \}$.
By a direct computation, we get
\begin{eqnarray*}
    \| f(g) \eta \|^2
                      = \int_{\mathcal{R}_{\alpha}} |(f(g)\eta) (y, x)|^2 d \nu
                      = \int_{x \in X} |\eta (\alpha(g)(x), x)|^2 d \mu
                      = |\eta(g)|^2.
\end{eqnarray*}
Let $X_q \subset X$ be a measurable subset such that $\chi (X_q) = p - q$.
The measure of subset $X_0 = \{x \in X \ | \ x \in X_q {\rm \ or\ } \alpha(g)(x) \in X_q\}$
satisfies $\nu(X_0) \le 2 - 2 \mathrm{tr}(q)$. Then we get
\begin{eqnarray*}
      \| f(g) \eta - q J q J f(g) \eta \|^2
    = \int_{x \in X_0} |\eta (\alpha(g)(x), x)|^2 d \mu
    = \mu(X_0) |\eta(g)|^2.
\end{eqnarray*}
Our claim was confirmed.
\end{proof}

We finish the proof of Proposition~\ref{Proposition; S is MEinvariant in SOE setting}.

\begin{proof}
[Proposition~\ref{Proposition; S is MEinvariant in SOE setting}]
We will show the existence of $\Phi$ in Proposition~\ref{Proposition; Continuity for G}.
We consider that $C^\ast_{\lambda}(G)$ is a subalgebra of $\mathcal{B}(p L^2(\mathcal{R}_\beta))$
and $C^\ast_{\rho}(G)$ is $J C^\ast_{\lambda}(G) J$.
By $P_0$ we denote the orthogonal projection from $L^2(\mathcal{R}_\beta)$ onto $\ell_2 G$.

Let $\{q_n\}$ be the sequence satisfying Lemma \ref{Lemma; ME Projections}.
and let $B_0$ be the $C^*$-algebra generated by $p$ and $\bigcup_n q_n C^*_\lambda (G) q_n$. The condition $(2)$ in Lemma \ref{Lemma; ME Projections} means $B_0 \subset B$. We recall that $B_0 \otimes J B_0 J$ is separable and that $F$ in the proof of Proposition \ref{Proposition; ME Continuity for Gamma} is nuclear.
By Choi--Effros lifting theorem \cite{Choi--Effros}, there exists a contractive c.p.~lifting $\Psi \colon B_0 \otimes J B_0 J \rightarrow D + K$ for
$\widetilde{\Psi} |_{B_0 \otimes J B_0 J}$.
We define contractive c.p.~maps $\Phi_n \colon C^\ast_{\lambda}(G) \otimes C^\ast_{\rho}(G) \rightarrow \mathcal{B}(\ell_2 G)$ by
\begin{eqnarray*}
\Phi_n(b \otimes J c J) = P_0 Q_n \Psi(q_n  b q_n \otimes J q_n c q_n J) Q_n P_0,
\end{eqnarray*}
where $Q_n = q_n J q_n J$. By the condition (3) in Lemma \ref{Lemma; ME Projections}, we get $P_0 Q_n K Q_n P_0 \subset \mathcal{K}(\ell_2 G)$. The element $\Phi_n(b \otimes J c J)$ is in
\begin{eqnarray*}
P_0 Q_n (b J c J + K) Q_n P_0 \subset P_0 Q_n b J c J Q_n P_0 + \mathcal{K}(\ell_2 G).
\end{eqnarray*}
The sequence $\{P_0 Q_n b J c J Q_n P_0 + \mathcal{K}(\ell_2 G)\} \subset \mathcal{B}(\ell_2 G) / \mathcal{K}(\ell_2 G)$ converges to $P_0 b J c J P_0 + \mathcal{K}(\ell_2 G)$, by the inequality
\begin{eqnarray*}
    & &     \| P_0 b J c J P_0 - P_0 Q_n b J c J Q_n P_0 \| \\
    &\le&   \| P_0 (1 - Q_n) b J c J P_0 \|
          + \| P_0 Q_n b J c J (1 - Q_n) P_0 \|\\
    &\le&   2 \| (1 - Q_n) P_0 \| \|b\| \|c\| \\
    &\le&  2 (2 - 2 \mathrm{tr}(q_n))^{1/2}\|b\| \|c\|.
\end{eqnarray*}
It follows that the natural $*$-homomorphism from the minimal tensor product $\widetilde{\Phi} \colon C^\ast_{\lambda}(G) \otimes C^\ast_{\rho}(G) \rightarrow \mathcal{B}(\ell_2 G) / \mathcal{K}(\ell_2 G)$ is given and $\widetilde{\Phi}$ is a limit of liftable maps. By Theorem $6$ of \cite{Arveson}, there exists a contractive c.p.~lifting $\Phi$ for $\widetilde\Phi$.
\end{proof}

\section{Final remark}
As a consequence of Proposition~\ref{Proposition; S is MEinvariant in SOE setting}, we get the following indecomposability of an equivalence relation given by a class $\mathcal{S}$ group.

\begin{corollary}\label{Corollary; Indecomposability}
Let $\Gamma$ be a countable group and let $H \subset G$ be an inclusion of countable groups. Suppose that $\Gamma \in \mathcal{S}$ and that the centralizer $Z_G (H)$ is non-amenable.
Let $\beta$ be a free m.p.\ action of $\Gamma$ on a standard measure space $(Y, \mu)$ and let $\alpha$ be a free m.p.\ action of $G$
on a measurable subset $X \subset Y$ with measure $1$.
If the orbits satisfy
$\alpha(G) (x) \subset \beta(\Gamma) (x)$ for ${\rm a.e. \ } x \in X$,
then $H$ is finite.
\end{corollary}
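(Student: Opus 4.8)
The plan is to reduce Corollary~\ref{Corollary; Indecomposability} to Ozawa's solidity theorem, after first feeding the hypotheses into Proposition~\ref{Proposition; S is MEinvariant in SOE setting}. Observe that the data in the corollary --- the free m.p.\ action $\beta$ of $\Gamma$ on $(Y,\mu)$, the free m.p.\ action $\alpha$ of $G$ on a subset $X \subset Y$ of measure $1$, the orbit inclusion $\alpha(G)(x) \subset \beta(\Gamma)(x)$ for a.e.\ $x \in X$, and the assumption $\Gamma \in \mathcal{S}$ --- are \emph{verbatim} the hypotheses of Proposition~\ref{Proposition; S is MEinvariant in SOE setting}. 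So the first step is simply to invoke that proposition and conclude $G \in \mathcal{S}$.

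Next I would pass to group von Neumann algebras and argue by contradiction, assuming $H$ is infinite. Since $G \in \mathcal{S}$, the group von Neumann algebra $L(G)$ is solid by \cite{ozawa; solid}: every diffuse von Neumann subalgebra of $L(G)$ has injective relative commutant. Because $H$ is infinite, $L(H)$ is a diffuse von Neumann subalgebra of $L(G)$, hence $L(H)' \cap L(G)$ is injective. The canonical unitaries implementing the elements of $Z_G(H)$ commute with those implementing $H$, so $L(Z_G(H)) \subset L(H)' \cap L(G)$; as a von Neumann subalgebra of a finite von Neumann algebra it is the range of a trace-preserving (hence normal) conditional expectation, so injectivity descends to it. By Connes' characterization of amenability, $Z_G(H)$ is then amenable, contradicting the hypothesis that $Z_G(H)$ is non-amenable. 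Therefore $H$ must be finite.

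All the substance sits in Proposition~\ref{Proposition; S is MEinvariant in SOE setting}, which is already available; the rest is the routine dictionary between subgroups of $G$ and von Neumann subalgebras of $L(G)$, so I do not expect a genuine obstacle here. The only points deserving a line of care are that $G$ need not be ICC --- harmless, since conditional expectations onto von Neumann subalgebras of a finite von Neumann algebra always exist and injectivity passes along them --- and that the diffuseness of $L(H)$ uses only that $H$ is infinite, not that it is non-amenable, so what solidity actually produces is the dichotomy ``$H$ is finite or $Z_G(H)$ is amenable'', which is precisely the contrapositive of the assertion.
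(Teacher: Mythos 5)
Your proposal is correct and follows the same route as the paper: invoke Proposition~\ref{Proposition; S is MEinvariant in SOE setting} to get $G \in \mathcal{S}$, then use the property that a class $\mathcal{S}$ group with non-amenable $Z_G(H)$ forces $H$ finite. The paper merely asserts this last property without proof; your derivation of it via solidity of $L(G)$, diffuseness of $L(H)$, and the conditional expectation onto $L(Z_G(H))$ is a correct way to fill in that step.
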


\begin{proof}
The class $\mathcal{S}$ has the following property: If $G \in \mathcal{S}$ and $Z_G (H)$ is non-amenable, then $H$ is finite.
\end{proof}

In particular, the group $G$ is not a direct product group of an infinite group and a non-amenable group. Word hyperbolic groups are typical
examples of $\mathcal{S}$ groups.
Adams \cite{Adams; Indecomposability} showed a measurable indecomposability of
non-amenable word hyperbolic groups. The above corollary covers some part of Adams' theorem.

The class $\mathcal{C}$ of \cite{Monod--Shalom} also contains non-amenable word-hyperbolic groups. A group $G \in \mathcal{C}$ satisfies an indecomposability property; $G$ has no infinite normal amenable subgroup. On the other hand, a non-amenable class $\mathcal{S}$ group can have an infinite normal amenable subgroup (for example, $\mathbb{Z}^2  \rtimes \mathrm{SL}(2, \mathbb{Z}) \in \mathcal{S}$ \cite{Ozawa; An Example}).

\begin{acknowledgment}
This paper was written during the author's stay in UCLA.
The author is grateful to Professor Sorin Popa and Professor Narutaka Ozawa
for their encouragement and fruitful conversations.
He is supported by JSPS Research Fellowships for Young Scientists.
\end{acknowledgment}


\begin{thebibliography}{99}

\bibitem[Ad]{Adams; Indecomposability}
S. Adams,
{\em Indecomposability of equivalence relations generated by word hyperbolic groups},
Topology {\bf 33} (1994), no. 4, 785--798.

\bibitem[AD]{Anantharaman-Delaroche}
C. Anantharaman-Delaroche,
{\em Syst\`{e}mes dynamiques non
commutatifs et moyennabilit\'{e},} Math. Ann. {\bf 279} (1987), no.
2, 297--315.

\bibitem[Ar]{Arveson}
W. Arveson,
{\em Notes on extensions of $C\sp{\sp*}$-algebras,}
Duke Math.~J.~{\bf 44} (1977), no.~2, 329--355.

\bibitem[BrOz]{Brown--Ozawa; Approximation}
N.P. Brown and N. Ozawa,
{\em $C\sp *$-algebras and finite-dimensional approximations,}
Graduate Studies in Mathematics, {\bf 88}. American Mathematical Society, Providence, RI, 2008.

\bibitem[ChEf]{Choi--Effros}
M.D. Choi, E.D. Effros,
{\em The completely positive lifting problem for $C\sp*$-algebras,}
Ann.~of Math.~(2) {\bf 104} (1976), no.~3, 585--609.

\bibitem[CoFeWe]{Connes--Feldman--Weiss}
A. Connes, J. Feldman and B. Weiss, {\em An amenable equivalence
relation is generated by a single transformation,} Ergodic Theory
Dynamical Systems {\bf 1} (1981), no.~4, 431--450 (1982).

\bibitem[FeMoo]{Feldman--Moore; II}
J. Feldman and C.C. Moore,
{\em Ergodic equivalence relations, cohomology, and von
Neumann algebras. II,} Trans.~Amer.~Math.~Soc.~{\bf 234} (1977), no.~2, 325--359.

\bibitem[Fu1]{Furman; Higher Rank Lattice}
A. Furman,
{\em Gromov's measure equivalence and rigidity of higher rank lattices},
Ann.~of Math.~(2) {\bf 150} (1999), no.~3, 1059--1081.

\bibitem[Fu2]{Furman; OE rigidity}
A. Furman,
{\em Orbit equivalence rigidity},
Ann.~of Math.~(2) {\bf 150} (1999), no.~3, 1059--1081.

\bibitem[Gr]{gromov}
M. Gromov,
{\em Asymptotic invariants of infinite groups. Geometric group theory, Vol. 2}, 1--295,
London Math.~Soc.~Lecture Note Ser., {\bf 182}, Cambridge Univ.~Press, Cambridge, 1993.

\bibitem[Hj]{hjorth; cost}
G. Hjorth,
{\em A lemma for cost attained,}
Ann. Pure Appl.~Logic {\bf 143} (2006), no.~1-3, 87--102.

\bibitem[Kid]{Kida}
Y. Kida,
{\em Measure equivalence rigidity of the mapping class group},
to appear in Ann.~of Math.

\bibitem[Kir]{Kirchberg; Weak Exactness}
E. Kirchberg,
{\em Commutants of unitaries in UHF algebras and functorial properties of exactness},
 J.~Reine Angew.~Math.~{\bf 452} (1994), 39--77.

\bibitem[MoSh]{Monod--Shalom}
N. Monod and Y. Shalom,
{\em Orbit equivalence rigidity and bounded cohomology,}
Ann.~of Math.~(2) {\bf 164} (2006), no.~3, 825--878.

\bibitem[MvN]{Murray--vN; IV}
F.J. Murray and
J. von Neumann, {\em On rings of operators. IV,} Ann.~of Math.~(2) {\bf } 44 (1936), 116-229

\bibitem[OrWe]{Ornstein--Weiss}
D. S. Ornstein and B. Weiss, {\em Ergodic theory of amenable group
actions. I. The Rohlin lemma,} Bull.~Amer.~Math.~Soc.~(N.S.) {\bf 2}
(1980), no.~1, 161--164.

\bibitem[Oz1]{ozawa; amenable actions and exactness}
N. Ozawa,
{\em Amenable actions and exactness for discrete groups},
C. R. Acad.~Sci.~Paris S\'{e}r.~I Math.~{\bf 330} (2000), no.~8, 691--695.

\bibitem[Oz2]{ozawa; solid}
N. Ozawa,
{\em Solid von Neumann algebras,} Acta Math.~{\bf 192}
(2004), no.~1, 111--117.

\bibitem[Oz3]{Ozawa; Kurosh}
N. Ozawa,
{\em A Kurosh-type theorem for type $\rm II\sb 1$
factors,} Int.~Math.~Res.~Not. (2006), Art. ID 97560, 21 pages, DOI 10.1155/IMRN/2006/97560.

\bibitem[Oz4]{Ozawa; Weakly Exact von Neumann algebras}
N. Ozawa,
{\em Weakly exact von Neumann algebras},
J.~Math.~Soc.~Japan {\bf 59} (2007), no.~4, 985--991.

\bibitem[Oz5]{Ozawa; An Example}
N. Ozawa, {\em An example of a solid von Neumann algebra},
preprint, arXiv:0804.0288.

\bibitem[OzPo1]{Ozawa--Popa; Prime Factorization}
N. Ozawa and S. Popa,
{\em Some prime factorization results for type
${\rm II}\sb 1$ factors,} Invent.~Math.~{\bf 156} (2004), no.~2,
223--234.

\bibitem[PemPer]{pemantle--peres; treeable}
R. Pemantle and Y. Peres,
{\em Nonamenable products are not treeable,}
Israel J.~Math.~{\bf 118} (2000), 147--155.

\bibitem[Tak]{Takesaki}
M. Takesaki,
{\em On the cross-norm of the direct product of $C\sp{\ast} $-algebras,}
Tohoku Math.~J.~(2) {\bf 16} (1964), 111--122.

\end{thebibliography}
\end{document}